\newtheorem{theorem}{Theorem}
\newtheorem{corollary}{Corollary}
\newtheorem{property}{Property}
\newtheorem{remark}{Remark}
\newtheorem{example}{Example}
\newtheorem{definition}{Definition}
\newtheorem{ass}{Assumption}
\newtheorem{lem}{Lemma}
\newenvironment{assumption}{\begin{ass}}{\hfill $\bullet$ \end{ass}}
\newenvironment{definitionn}{\begin{definition}}{\hfill $\bullet$ \end{definition}}
\newenvironment{Remark}{\begin{remark}}{\hfill $\bullet$ \end{remark}}
\newenvironment{Theorem}{\begin{theorem}}{\hfill $\square$ \end{theorem}}
\newenvironment{Property}{\begin{property}}{\hfill \end{property}}
\newenvironment{lemma}{\begin{lem}}{\hfill $\square$ \end{lem}}
\def\BibTeX{{\rm B\kern-.05em{\sc i\kern-.025em b}\kern-.08em
    T\kern-.1667em\lower.7ex\hbox{E}\kern-.125emX}}
\begin{document}
\title{\bf Boundary Control for Wildfire Mitigation}
\author{M. C. Belhadjoudja, M. Maghenem, E. Witrant, D. Georges
\thanks{The authors are with Univ. Grenoble Alpes, CNRS, Grenoble-INP, GIPSA-lab, F-38000, Grenoble, France (e-mail: mohamed.belhadjoudja,mohamed.maghenem,didier.georges@gipsa-lab.fr). E. Witrant is also with the Departement of Mechanical Engineering, Dalhousie University, Halifax B3H 4R2, Nova Scotia, Canada (e-mail: emmanuel.witrant@gipsa-lab.fr).}}

\maketitle

\pagenumbering{gobble}

\begin{abstract}
In this paper, we propose a feedback control strategy to protect vulnerable areas from wildfires. We consider a system of coupled partial differential equations (PDEs) that models heat propagation and fuel depletion in wildfires and study two cases. First, when the wind velocity is known, we design a Neumann-type boundary controller guaranteeing that the temperature of some protected region converges exponentially, in the $L^2$ norm, to the ambient temperature. Second, when the wind velocity is unknown, we design an adaptive Neumann-type boundary controller guaranteeing the asymptotic convergence, in the $L^2$ norm, of the temperature of the protected region to the ambient temperature. In both cases, the controller acts along the boundary of the protected region and relies solely on temperature measurements along that boundary. Our results are supported by numerical simulations.
\end{abstract}

\section{Introduction}
Wildfires represent a catastrophic threat, causing widespread destruction across ecosystems, infrastructures, and human settlements. The devastation extends beyond the immediate damage, as wildfires release large amounts of pollutants into the atmosphere, significantly impacting air quality, biodiversity, and soil integrity \cite{air_quality}. As climate change accelerates, the frequency and intensity of these fires are expected to increase, amplifying their effects \cite{worse,worse2}. This trend makes the ability to predict the spread of heat, and protect vulnerable areas from damage, an urgent global challenge \cite{danger1}.

In response to this growing threat, the physics community has developed advanced models to simulate wildfire dynamics, capturing key phenomena such as heat transfer, fuel depletion, and wind effects. A comprehensive survey of physical and quasi-physical models for wildfire propagation, covering research from 1990 to 2007, is provided in \cite{model1}; see also \cite{model_learning} for machine learning-based models. Among the most sophisticated are models based on extended irreversible thermodynamics \cite{model2}, which explicitly account for the coupling between pyrolysis, gas-phase combustion, and heat transfer mechanisms. While these models offer a deep understanding of wildfire physics, their complexity comes at a cost: their intricate structure poses significant challenges for real-time estimation and control. 

Among the various models available, we consider in this paper the framework proposed in \cite{model3}, which strikes a balance between physical relevance and complexity. This model consists of two coupled 2-dimensional PDEs, one governing the transport and diffusion of fire temperature due to fuel combustion, and the other describing the depletion of fuel over time. To the best of our knowledge, this model has been studied by the control community only in \cite{georges,georges2}, where it is used to estimate ignition locations using a sparse sensor network for temperature measurements.

In this paper, we focus on a goal that is different from \cite{georges,georges2}. We suppose that the wildfire, described by the model from \cite{model3}, evolves in some domain $\Omega \subset \mathbb{R}_{+}^2$, with $\mathcal{W}\subset \Omega$ a region to protect. We first consider the case where the wind velocity at the boundary of $\mathcal{W}$ is known, and we design a Neumann-type controller at that location, which ensures that the temperature of $\mathcal{W}$ converges (exponentially) towards the desired ambient level. Next, we suppose that the wind velocity is unknown and bounded, with an unknown upperbound, and we design an adaptive Neumann-type controller which guarantees the boundedness and asymptotic convergence to the ambient temperature of the temperature of $\mathcal{W}$. The only output available in both cases is the temperature along the boundary of $\mathcal{W}$.

The rest of the paper is organized as follows. In Section \ref{problem}, we recall the wildfire model in \cite{model3}, and describe our control objective. In Section \ref{main results}, we state and prove our main results. In Section \ref{simu}, we illustrate our result by performing a set of numerical simulations. The paper is finalized by a conclusion and some perspectives.

\textit{Notation.}
Let $\Omega \subset \mathbb{R}_{+}^2$ be connected. Given $T:\Omega \times [0,+\infty)\to \mathbb{R}$, with $(x,t)\mapsto T(x,t)$ and $x:=\begin{bmatrix} x_1 & x_2 \end{bmatrix}$, we denote by $\Delta T$ the Laplacian of $T$, defined as 
$\Delta T := \frac{\partial^2 T}{\partial x_1^2} + \frac{\partial^2 T}{\partial x_2^2},$
where $\frac{\partial^2 T}{\partial x_i^2}$, for $i\in \{1,2\}$, denotes the second-order partial derivative of $T$ with respect to $x_i$. Furthermore, we let $\nabla T$ be the gradient of $T$, given by 
$\nabla T := \begin{bmatrix}
    \frac{\partial T}{\partial x_1} & \frac{\partial T}{\partial x_2}
\end{bmatrix},$
where $\frac{\partial T}{\partial x_i}$, for $i\in \{1,2\}$, is the partial derivative of $T$ with respect to $x_i$. The divergence of $v : \Omega \to \mathbb{R}^2$, with $v  := \begin{bmatrix}
    v_1  & v_2 
\end{bmatrix}$, is defined as 
$ \nabla \cdot v := \frac{\partial v_1}{\partial x_1} + \frac{\partial v_2}{\partial x_2}.$
We denote by $a\cdot b$ the scalar product between the two vectors $a,b\in \mathbb{R}^2$, and we let $|a|:=\sqrt{a\cdot a}$ be the norm of $a$. Next, we let $L^{2}(\Omega;I)$, for a given set $I\subset \mathbb{R}$, be the space of functions $T:\Omega \to I$ such that $|T|_{L^2(\Omega)}^2 := \int_{\Omega}T ^2dx<+\infty$. We also denote by $H^2(\Omega)$ the space of functions $T:\Omega \to \mathbb{R}$ such that $T,\left|\nabla T\right|$, and $\Delta T\in L^2(\Omega;\mathbb{R})$. We let $L^2_{loc}(0,+\infty;H^2(\Omega))$ be the space of functions $T:\Omega \times (0,+\infty)\to \mathbb{R}$ such that $T(\cdot,t)\in H^2(\Omega)$ for almost all $t\in (0,+\infty)$, and $\int_{K}\int_{\Omega}(T(x,t)^2+\left|\nabla T(x,t)\right|^2+(\Delta T(x,t))^2)dxdt<+\infty$ for any compact set $K\subset (0,+\infty)$. We similarly define the space $L^{2}_{loc}(0,+\infty;L^2(\Omega))$. Finally, a.a. means almost all.

\section{Problem Formulation}\label{problem}
\subsection{Wildfire propagation model}
We consider in this paper the following system \cite{model3}
\begin{equation*}
\Sigma \ : \ \left\lbrace
\begin{aligned}
\frac{\partial T}{\partial t} =&~ \varepsilon \Delta T - v\cdot \nabla T  + A\left(Sr(T)-C(T-T_a)\right) \\
\frac{\partial S}{\partial t} =&~ -C_S Sr(T),
\end{aligned}
\right.
\end{equation*}
where $(T,S) : \Omega \times [0,+\infty) \to \mathbb{R}_{+}\times [0,1]$ and $\Omega \subset \mathbb{R}_{+}^2$ is bounded and connected. The meaning of each term in $\Sigma$ is explained below.
\begin{itemize}
    \item The domain $\Omega$ represents the area where the fire evolves.
    \item $t$ is the time variable.
    \item $T$ is the temperature of the fire.
    \item $S$ represents the fuel supply mass fraction. 
    \item The constant $T_a>0$ is the ambient temperature.
    \item The parameter $\varepsilon >0$ represents the thermal diffusivity, and $\varepsilon \Delta T$ is the diffusion term, governing the spread of heat within the domain.
    \item The coefficient $A>0$ corresponds to the maximal temperature increase per unit time when combustion is at its peak, assuming $S:=1$ (ample fuel supply) and negligible cooling effects.
    \item The constant $C>0$ characterizes the rate at which heat dissipates into the surroundings.
    \item $C_S\geq 0$ characterizes the rate at which fuel diminishes relative to its initial amount.
    \item $v:\Omega \times [0,+\infty)\to \mathbb{R}^2$, $(x,t)\mapsto v(x,t) = \begin{bmatrix} v_1(x,t) & v_2(x,t) \end{bmatrix}$ is the wind velocity, which advects the heat via the advection term $-v\cdot \nabla T$.
    \item $Sr(T)$ represents the rate at which the fuel is consumed due to burning, with $r(T)$ given by the Arrhenius law 
\begin{equation*}
r(T) :=
\left\lbrace 
\begin{aligned}
&\exp^{-\gamma/(T-T_a)} \quad &T>T_a \\
&0 \quad &T\leq T_a,
\end{aligned}
\right.
\end{equation*}
where $\gamma >0$. We refer to $A(Sr(T)-C(T-T_a))$ as the reaction term.
\end{itemize}

In this model, the temperature is in Kelvin, the time is in seconds, and the spatial variables are in meter. The units of the coefficients are detailed in \cite{model3}.

We let $\mathcal{W}\subset \Omega$ be connected, and with a piecewise $\mathcal{C}^1$ boundary $\partial \mathcal{W}$. Along $\partial \mathcal{W}$, we impose the following Neumann-type boundary condition 
\begin{align}
\frac{\partial T(x,t)}{\partial n(x) } =&~ \kappa (T(x,t)-T_a,x,t) \nonumber \\
&~\qquad \text{for a.a. $x\in \partial \mathcal{W}$ and a.a. $t>0$}, \label{neumann_control}
\end{align}
where $\kappa : \mathbb{R}\times \partial \mathcal{W}\times [0,+\infty)\to \mathbb{R}$ will be designed, and $n:\partial \mathcal{W}\to \mathbb{R}^2$ is the outward-pointing unit normal vector field to $\mathcal{W}$; see \cite[Appendix C]{evans}\footnote{We recall that $\frac{\partial T}{\partial n} := n\cdot \nabla T$.}.

Furthermore, we suppose that $\Omega$ admits a piecewise $\mathcal{C}^1$ boundary $\partial \Omega$. Along $\partial \Omega\setminus \partial \mathcal{W}$, we impose the following homogeneous Neumann boundary condition 
\begin{align}
\frac{\partial T(x,t)}{\partial \nu(x) } = 0 \quad \text{for a.a. $x\in \partial \Omega \setminus \partial \mathcal{W}$ and a.a. $t>0$}, \label{neumann}
\end{align}
where $\nu : \partial \Omega\to \mathbb{R}^2$ is the outward-pointing unit normal vector field to $\Omega$.

The solutions to $\Sigma$ under \eqref{neumann_control}-\eqref{neumann} are understood in the following sense. 
\begin{definitionn}\label{def1}
A solution to $\Sigma$ under \eqref{neumann_control}-\eqref{neumann}, starting from the initial condition 
$(T_o,S_o) \in L^2(\Omega;\mathbb{R}_+)\times L^2(\Omega;[0,1]),$  
is any pair 
$(T,S)\in L^2_{loc}(0,+\infty;H^2(\Omega))\times L^2_{loc}(0,+\infty;L^2(\Omega))$
such that the equations in $\Sigma$ hold for a.a. $x\in \Omega$ and a.a. $t>0$, \eqref{neumann_control} and \eqref{neumann} hold for a.a. $t>0$, a.a. $x\in \partial \mathcal{W}$ and a.a. $x\in \partial \Omega \setminus \partial \mathcal{W}$, respectively, and $(T(x,0),S(x,0))=(T_o ,S_o )$ for a.a. $x\in \Omega$.
\end{definitionn}

\subsection{Control-design problem}
We introduce the deviation $\tilde{T} := T-T_a$. 
In this new variable, $\Sigma$ can be rewritten as
\footnote{By abuse of notation, we say that $(\tilde{T},S)$ is a solution to $\Sigma$ whenever $(\tilde{T}+T_a,S)$ is a solution to $\Sigma$, in the sense of Definition \ref{def1}.} 
\begin{equation*}
\Sigma \ : \ \left\lbrace
\begin{aligned}
\frac{\partial \tilde{T}}{\partial t} =&~ \varepsilon \Delta \tilde{T} - v\cdot \nabla \tilde{T}  + A\left(S\tilde{r}(\tilde{T})-C\tilde{T}\right) \\
\frac{\partial S}{\partial t} =&~ -C_S S\tilde{r}(\tilde{T}),
\end{aligned}
\right.
\end{equation*}
where 
\begin{equation*}
\tilde{r}(\tilde{T}) :=
\left\lbrace 
\begin{aligned}
&\exp^{-\gamma/\tilde{T}} \quad &\tilde{T}>0 \\
&0 \quad &\tilde{T}\leq 0.
\end{aligned}
\right.
\end{equation*}
Furthermore, \eqref{neumann_control} and \eqref{neumann} become, respectively, 
\begin{align*}
\frac{\partial \tilde{T}(x,t)}{\partial n } =&~ \kappa (\tilde{T}(x,t),x,t)\quad \text{for a.a. $x\in \partial \mathcal{W}$ and a.a. $t>0$},\\
\frac{\partial \tilde{T}(x,t)}{\partial \nu  } =&~ 0 \quad \text{for a.a. $x\in \partial \Omega \setminus \partial \mathcal{W}$ and a.a. $t>0$}.
\end{align*}

When the wind velocity $v$ is known, we will design $\kappa$ to guarantee the following property.
\begin{Property}\label{prop1}
Given $S_o\in L^2(\Omega;[0,1])$, there exists a constant $\alpha>0$ such that, if $(\tilde{T},S)$ is a solution to $\Sigma$ starting from $(\tilde{T}_o,S_o)$ with $\tilde{T}_o\in L^2(\Omega;\mathbb{R})$, we have 
\begin{align}
|\tilde{T}(\cdot,t)|_{L^2(\mathcal{W})}^2 \leq |\tilde{T}_o|_{L^2(\mathcal{W})}^2\exp^{-\alpha t} \quad \forall t\geq 0. \label{L2_decay}
\end{align}
\hfill $\bullet$
\end{Property}

When the wind velocity is unknown, we will redesign $\kappa$ adaptively to guarantee the following property.
\begin{Property}\label{prop2}
If $(\tilde{T},S)$ is a solution to $\Sigma$ starting from $(\tilde{T}_o,S_o) \in L^2(\Omega;\mathbb{R})\times L^2(\Omega;[0,1])$, then 
\begin{align}
|\tilde{T}|_{L^2(\mathcal{W})} \ \ \text{is bounded and} \ \ \lim_{t\to +\infty}|\tilde{T}(\cdot,t)|_{L^2(\mathcal{W})} = 0. \label{ineq_prop2}
\end{align}
\hfill $\bullet$
\end{Property}

\begin{Remark}
It is argued in \cite{model3} that the wind velocity $v$ can be estimated from atmospheric data or modeled using, e.g., the Navier-Stokes equations. However, both approaches introduce potential inaccuracies, highlighting the need for an adaptive redesign of $\kappa$.
\end{Remark}

\begin{Remark}
Properties \ref{prop1} and \ref{prop2} describe how the region $\mathcal{W}$ is protected from the wildfire, albeit in different ways depending on whether the wind velocity is known or not. In Property \ref{prop1}, we essentially guarantee that the region $\mathcal{W}$ is isolated from the wildfire, with the temperature within this region decaying exponentially towards the ambient temperature. On the other hand, in Property \ref{prop2}, when the wind velocity is unknown, we lose the guarantee of exponential decay of the temperature of $\mathcal{W}$ towards the ambient temperature. Without accurate knowledge of the wind velocity, we are not able, with our approach, to fully account for the heat advected from $\Omega \setminus \mathcal{W}$ towards $\mathcal{W}$. As a result, we no longer guarantee that the protected region $\mathcal{W}$ is isolated from the wildfire. However, we can still ensure that the temperature of $\mathcal{W}$ converges to the ambient temperature as time grows, mitigating the impact of the fire.
\end{Remark}

To guarantee Property \ref{prop1}, we require the following assumption.
\begin{assumption}\label{ass1}
We have
\begin{align}
2AC + \frac{2\varepsilon }{\sup_{x\in \mathcal{W}}|x|^2}-\frac{2A\exp^{-1}}{\gamma}\sup_{x\in \mathcal{W}}\left|S_o \right| > \mathcal{V}, \label{ineq_ass}
\end{align}
where $\mathcal{V} := \sup_{x\in \mathcal{W}}\left|\nabla \cdot v \right|$.
\end{assumption}

\begin{Remark}
Inequality \eqref{ineq_ass} is satisfied when the heat loss coefficient $C$ and the thermal diffusivity $\varepsilon$ are sufficiently large. Both coefficients directly influence the magnitude of the stabilizing terms in $\Sigma$, with $C$ enhancing heat dissipation, and $\varepsilon$ strengthening thermal diffusion. A more interesting aspect is the dependence of \eqref{ineq_ass} on $\mathcal{V}$, which can be interpreted as the largest value of the wind acceleration in $\mathcal{W}$. While Assumption \ref{ass1} does not impose any restrictions on the wind velocity, it does require that the wind acceleration remains relatively small compared to the other positive terms in \eqref{ineq_ass}. Intuitively, and informally speaking, a sudden gust or shift in wind direction seems to introduce more complexity to control the spread of wildfires compared to a steady wind.
\end{Remark}

To guarantee Property \ref{prop2}, we need, in addition to Assumption \ref{ass1}, the following condition. 
\begin{assumption}\label{ass2}
The wind velocity $v$ is bounded along $\partial \mathcal{W}$. That is, there exists a constant $\bar{v}\geq 0$ such that 
\begin{align}
|v(x,t)| \leq \bar{v} \quad \text{for all } (x,t)\in \partial \mathcal{W}\times (0,+\infty). \label{boundedness}  
\end{align}
\end{assumption}

We emphasize that the upperbound $\bar{v}$ on $v$ is not required to be known. 

\section{Main Results} \label{main results}
\subsection{Known Wind Velocity}\label{main1}
In this section, we suppose that the wind velocity $v$ along $\partial \mathcal{W}$ is known, and we show how to establish Property \ref{prop1}. 

We first introduce the feedback law $\kappa$, given by
\begin{align}
\kappa := \left[\left(\frac{n\cdot v-2k}{2\varepsilon}\right)-2\frac{\sup_{x\in \partial \mathcal{W}}|x|}{\sup_{x\in \mathcal{W}}|x|^2}\right] \tilde{T}, \label{feedback}
\end{align}
where $k\geq 0$ is a free control gain. We can note that $\kappa$ depends explicitly on $x$ and $t$ since it involves the wind velocity $v$. A physical interpretation of \eqref{feedback} is provided below.

\begin{Remark}
The feedback law $\kappa$ is constituted of two key terms. The first term, $\tilde{T}(n \cdot v)/(2\varepsilon)$, counteracts the heat advection across the boundary $\partial \mathcal{W}$ due to wind. Indeed, when $v\cdot n >0$, wind flows inward from $\Omega \setminus \mathcal{W}$ towards $\mathcal{W}$, bringing external heat into $\mathcal{W}$. In this case, our controller applies a negative heat flux (cooling) proportional to the incoming flow to counteract this external thermal influence. Conversely, when $v\cdot n <0$, wind flows outward, carrying heat away from $\mathcal{W}$, potentially making the temperature of $\mathcal{W}$ smaller than the ambient temperature. Here, the controller applies a positive heat flux (heating) to compensate for this heat loss, maintaining a thermal balance. The second term, $\tilde{T}(-k/\varepsilon - 2\sup_{x\in \partial \mathcal{W}}|x|/\sup_{x\in \mathcal{W}}|x|^2)$, provides damping regardless of wind direction, to regulate the temperature of $\mathcal{W}$ towards the ambient temperature.
\end{Remark}
\begin{Theorem}\label{thm1}
Suppose that Assumption \ref{ass1} holds, and let $\kappa$ be given by \eqref{feedback}. Then, Property \ref{prop1} is verified with 
\begin{align}
\hspace{-0.2cm}\alpha :=&~2AC+\frac{2\varepsilon}{\sup_{x\in \mathcal{W}}|x|^2}-\mathcal{V}-\frac{2A\exp^{-1}}{\gamma}\sup_{x\in \mathcal{W}}\left|S_o\right|. \label{alpha}
\end{align}
\end{Theorem}

\begin{Remark}
Our control strategy has two key features. First, it employs Neumann-type actuation, meaning that we regulate the heat flux at $\partial \mathcal{W}$ rather than directly imposing a temperature on $\partial \mathcal{W}$ (Dirichlet actuation). Neumann boundary control is usually more practical to implement than its Dirichlet counterpart when it comes to thermal systems \cite{heat}. E.g., firefighters positioned along $\partial \mathcal{W}$ can activate water sprays, while a control algorithm adjusts in real time their intensity to impose a desired heat flux. Second, our controller is \textit{decentralized} \cite{dec,dec2}, meaning that the value of $\kappa$ at each location $x\in \partial \mathcal{W}$ depends only on the values of $v$ and $T$ at that location. This is particularly advantageous in environments where state measurements are challenging or costly, such as in wildfire management \cite{detection}.
\end{Remark}
 
We follow a Lyapunov-based approach to prove Theorem$~$\ref{thm1}. Specifically, we consider the functional 
\begin{align}
B(\tilde{T}) := \frac{1}{2} |\tilde{T}|_{L^2(\mathcal{W})}^2. \label{barrier}
\end{align}
By differentiating $B$ with respect to time, we obtain 
\begin{align}
\dot{B}
=&~ \int_{\mathcal{W}}\tilde{T}\bigg[\varepsilon \Delta \tilde{T}-v\cdot \nabla \tilde{T}+A\big(S\tilde{r}(\tilde{T})-C\tilde{T}\big)\bigg]dx. \label{B_der_1}
\end{align}

In the next set of lemmas, we analyze each term at the right-hand side of \eqref{B_der_1} in order to derive an appropriate upperbound on $\dot{B}$. We start by evaluating the contribution of the diffusion term $\varepsilon \Delta \tilde{T}$. 
\begin{lemma}
Along the solutions to $\Sigma$, we have 
\begin{align}
\int_{\mathcal{W}}\tilde{T}\Delta \tilde{T}dx &\leq - \frac{2B}{\sup_{x\in \mathcal{W}}|x|^2} +\bigg(2\frac{\sup_{x\in \partial \mathcal{W}}|x|}{\sup_{x\in \mathcal{W}}|x|^2}\bigg)|\tilde{T}|_{L^2(\partial \mathcal{W})}^2\nonumber \\
&~+\int_{\partial \mathcal{W}} \frac{\partial \tilde{T}}{\partial n}\tilde{T}d\sigma, \label{diffusion2}
\end{align}
where $\sigma$ is the surface measure on $\partial \mathcal{W}$; see \cite[Appendix C]{evans} for the definition of $d\sigma$.
\end{lemma}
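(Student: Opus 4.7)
The plan is to combine Green's first identity for the Laplacian with a Poincaré-type trace inequality derived from the pointwise identity $\nabla \cdot x = 2$ on $\mathbb{R}^2$. The decomposition will naturally produce the three terms on the right-hand side of the claimed bound: the boundary flux term comes directly from Green's identity, while the volumetric Dirichlet energy is traded for the $L^2$ mass of $\tilde{T}$ in $\mathcal{W}$ and an additional boundary trace term via the weight $|x|^2$.

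First, I would apply Green's first identity (which is available because $\tilde{T}(\cdot,t)\in H^2(\Omega)$ a.a.\ $t$ by Definition~\ref{def1}, so the trace and normal trace on $\partial\mathcal{W}$ make sense):
\begin{equation*}
\int_{\mathcal{W}}\tilde{T}(x)\,\Delta \tilde{T}(x)\,dx \;=\; \int_{\partial\mathcal{W}} \tilde{T}(x)\,\frac{\partial \tilde{T}(x)}{\partial n(x)}\,d\sigma \;-\; \int_{\mathcal{W}}|\nabla \tilde{T}(x)|^{2}\,dx.
\end{equation*}
The first summand on the right is exactly the last boundary term appearing in the lemma, so everything reduces to estimating $-\int_{\mathcal{W}}|\nabla\tilde{T}|^{2}\,dx$ from above.

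Second, I would establish the weighted Poincaré trace inequality
\begin{equation*}
\int_{\mathcal{W}}\tilde{T}(x)^{2}\,dx \;\leq\; \Bigl(\sup_{x\in\mathcal{W}}|x|^{2}\Bigr)\int_{\mathcal{W}}|\nabla \tilde{T}(x)|^{2}\,dx \;+\; 2\,\Bigl(\sup_{x\in\partial\mathcal{W}}|x|\Bigr)\int_{\partial\mathcal{W}}\tilde{T}(x)^{2}\,d\sigma.
\end{equation*}
To get this, I start from $\nabla\cdot x = 2$ and integrate by parts the identity $2\tilde{T}^{2} = \tilde{T}^{2}\,\nabla\cdot x$, yielding
\begin{equation*}
2\int_{\mathcal{W}}\tilde{T}^{2}\,dx = \int_{\partial\mathcal{W}}\tilde{T}^{2}\,(x\cdot n)\,d\sigma - 2\int_{\mathcal{W}}\tilde{T}\,(x\cdot \nabla \tilde{T})\,dx.
\end{equation*}
On the boundary I bound $x\cdot n\leq |x|\leq \sup_{\partial\mathcal{W}}|x|$, and on the interior term I apply Young's inequality $2|\tilde{T}||x\cdot\nabla\tilde{T}|\leq \tilde{T}^{2}+|x|^{2}|\nabla\tilde{T}|^{2}$, absorbing the $\tilde{T}^{2}$ into the left-hand side and using $|x|^{2}\leq \sup_{\mathcal{W}}|x|^{2}$. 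Dividing through by $\sup_{x\in\mathcal{W}}|x|^{2}$ and multiplying by $-1$ gives the desired upper bound on $-\int|\nabla\tilde{T}|^{2}\,dx$, and substituting into the Green's identity yields the lemma (with $B=\tfrac{1}{2}\int_{\mathcal{W}}\tilde{T}^{2}\,dx$).

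The only delicate step is the second one: choosing the Young's inequality weighting correctly so that the $\tilde{T}^{2}$ produced by the product $\tilde{T}(x\cdot\nabla\tilde{T})$ can be reabsorbed, while the residual Dirichlet energy carries exactly the prefactor $\sup_{x\in\mathcal{W}}|x|^{2}$. Once the trace Poincaré inequality is in the stated form, the rest is algebraic rearrangement. No conditions from Assumption~\ref{ass1} or properties of $v$, $S$, $\kappa$ are needed here — the lemma is a purely geometric estimate on the Laplacian, independent of the dynamics.
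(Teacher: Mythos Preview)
Your proof is correct and follows the same two-step structure as the paper: Green's first identity, then the Friedrichs--Poincar\'e inequality to convert $-\int_{\mathcal{W}}|\nabla\tilde{T}|^{2}\,dx$ into the $B$ and boundary-trace terms. The only difference is that the paper invokes the Friedrichs--Poincar\'e inequality as a cited result from \cite{fried}, whereas you derive it from scratch via $\nabla\cdot x=2$ and Young's inequality; your derivation actually produces the sharper constant $1$ (rather than $2$) in front of $\sup_{\partial\mathcal{W}}|x|\int_{\partial\mathcal{W}}\tilde{T}^{2}\,d\sigma$, though you correctly state the weaker form that matches the lemma.
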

\begin{proof}
To establish \eqref{diffusion2}, we first use integration by parts to obtain 
\begin{align}
\int_{\mathcal{W}}\tilde{T}\Delta \tilde{T}&dx = \int_{\partial \mathcal{W}}\frac{\partial \tilde{T}}{\partial n}\tilde{T}d\sigma - |\nabla \tilde{T}|_{L^2(\mathcal{W})}^2. \label{first_integral}
\end{align}
According to Friedrichs-Poincaré's inequality \cite{fried}, we have 
\begin{align*}
|\tilde{T}|_{L^2(\mathcal{W})}^2 \leq&~ 2\sup_{x\in \partial \mathcal{W}}|x||\tilde{T}|_{L^2(\partial \mathcal{W})}^2 +\sup_{x\in \mathcal{W}}|x|^2|\nabla \tilde{T}|_{L^2(\mathcal{W})}^2,
\end{align*}
which implies that 
\begin{align}
-| \nabla \tilde{T}|_{L^2(\mathcal{W})}^2 
\leq&~ - \frac{|\tilde{T}|_{L^2(\mathcal{W})}^2}{\sup_{x\in \mathcal{W}}|x|^2} \nonumber \\
&~+\left(2\frac{\sup_{x\in \partial \mathcal{W}}|x|}{\sup_{x\in \mathcal{W}}|x|^2}\right)|\tilde{T}|_{L^2(\partial \mathcal{W})}^2.\label{g_fred}
\end{align}
We obtain \eqref{diffusion2} by combining \eqref{first_integral} and \eqref{g_fred}.
\end{proof}

Next, we evaluate the contribution of advection. 
\begin{lemma}
Along the solutions to $\Sigma$, we have 
\begin{align}
-\int_{\mathcal{W}}\tilde{T} v\cdot \nabla \tilde{T}dx \leq -\frac{1}{2}\int_{\partial \mathcal{W}}\tilde{T}^2 v\cdot nd\sigma + \mathcal{V}B.  \label{advection}
\end{align}
\end{lemma}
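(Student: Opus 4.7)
The plan is to exploit the product-rule identity $\tilde{T}\,\nabla \tilde{T} = \tfrac{1}{2}\nabla(\tilde{T}^2)$ so that the integrand becomes a divergence, and then apply the divergence theorem to move the velocity field onto the boundary and onto $\nabla\cdot v$. Concretely, I would first rewrite
\begin{align*}
-\int_{\mathcal{W}} \tilde{T}(x)\, v(x)\cdot \nabla \tilde{T}(x)\, dx
= -\frac{1}{2}\int_{\mathcal{W}} v(x)\cdot \nabla\!\left(\tilde{T}(x)^2\right) dx.
\end{align*}

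Next, I would apply integration by parts (the divergence theorem) to this last integral. Using $\nabla\cdot(\tilde{T}^2 v) = v\cdot \nabla(\tilde{T}^2) + \tilde{T}^2\,\nabla\cdot v$, Green's identity on $\mathcal{W}$ yields
\begin{align*}
\int_{\mathcal{W}} v(x)\cdot \nabla(\tilde{T}(x)^2)\, dx
= \int_{\partial \mathcal{W}} \tilde{T}(x)^2\, v(x)\cdot n(x)\, d\sigma
 - \int_{\mathcal{W}} \tilde{T}(x)^2\, \nabla\cdot v(x)\, dx.
\end{align*}
Substituting this into the previous expression gives the exact identity
\begin{align*}
-\int_{\mathcal{W}} \tilde{T}(x)\, v(x)\cdot \nabla \tilde{T}(x)\, dx
= -\frac{1}{2}\int_{\partial \mathcal{W}} \tilde{T}(x)^2\, v(x)\cdot n(x)\, d\sigma
+ \frac{1}{2}\int_{\mathcal{W}} \tilde{T}(x)^2\, \nabla\cdot v(x)\, dx.
\end{align*}

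Finally, I would bound the remaining interior term using the definition of $\mathcal{V}$ in \eqref{V_def}: since $\nabla\cdot v(x) \leq |\nabla\cdot v(x)| \leq \mathcal{V}$ for all $x\in \mathcal{W}$, the last integral is at most $\mathcal{V}\int_{\mathcal{W}} \tilde{T}(x)^2\, dx = 2\mathcal{V} B$, and multiplying by $\tfrac{1}{2}$ produces the stated $\mathcal{V} B$ term. This yields \eqref{advection}.

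There is no real obstacle here; the only subtlety worth noting is a regularity/justification point, namely that the integration by parts is legitimate under the solution class from Definition \ref{def1}, where $\tilde{T}(\cdot,t)\in H^2(\Omega)$ for a.a.\ $t$, so that $\tilde{T}^2 v$ has enough regularity for Green's identity on $\mathcal{W}$ (with its piecewise $\mathcal{C}^1$ boundary) to apply. Beyond that, the step is essentially a one-line divergence-theorem manipulation followed by a pointwise bound.
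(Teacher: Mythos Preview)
Your proposal is correct and follows essentially the same approach as the paper: the paper carries out the same integration by parts componentwise (splitting into $v_1\,\partial_{x_1}(\tilde{T}^2)$ and $v_2\,\partial_{x_2}(\tilde{T}^2)$) to reach the identical exact identity, and then bounds the divergence term by $\mathcal{V}$. Your use of the vector form $\nabla\cdot(\tilde{T}^2 v)$ is a slightly more compact way of writing the same computation.
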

\begin{proof}
To establish \eqref{advection}, we first rewrite its left-hand side as 
\begin{align*}
\int_{\mathcal{W}}\tilde{T} v\cdot \nabla \tilde{T}dx =&~ \frac{1}{2}\int_{\mathcal{W}}v_1\frac{\partial (\tilde{T}^2)}{\partial x_1}dx+\frac{1}{2}\int_{\mathcal{W}}v_2\frac{\partial (\tilde{T}^2)}{\partial x_2}dx.
\end{align*}
Using integration by parts, we obtain 
\begin{align*}
\int_{\mathcal{W}}v_1 \frac{\partial \tilde{T} ^2}{\partial x_1}dx =&~ \int_{\partial \mathcal{W}}\tilde{T} ^2v_1 n_1 d\sigma  - \int_{\mathcal{W}}\tilde{T} ^2\frac{\partial v_1 }{\partial x_1}dx.
\end{align*}
Similarly, we have 
\begin{align*}
\int_{\mathcal{W}}v_2 \frac{\partial \tilde{T} ^2}{\partial x_2}dx =&~ \int_{\partial \mathcal{W}}\tilde{T} ^2v_2 n_2 d\sigma - \int_{\mathcal{W}}\tilde{T} ^2\frac{\partial v_2 }{\partial x_2}dx.
\end{align*}
As a consequence, 
\begin{align*}
\int_{\mathcal{W}}\tilde{T}  v \cdot \nabla \tilde{T} dx
=&~\frac{1}{2}\int_{\partial \mathcal{W}}\tilde{T} ^2 v \cdot n d\sigma-\frac{1}{2}\int_{\mathcal{W}}\tilde{T} ^2 \left(\nabla \cdot v \right)dx,
\end{align*}
which implies \eqref{advection}.
\end{proof}

Finally, we analyze the contribution of the term $AS\tilde{r}(\tilde{T})$ coming from the Arrhenius law. 
\begin{lemma}
Along the solutions to $\Sigma$, we have 
\begin{align}
\int_{\mathcal{W}}S \tilde{T} &\tilde{r}(\tilde{T} )dx \leq \frac{2\exp^{-1}}{\gamma}\sup_{x\in \mathcal{W}}\left|S_o \right|B. \label{reaction}
\end{align}
\end{lemma}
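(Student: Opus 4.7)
The plan is to peel off the state-dependent factors one at a time and reduce the estimate to a one-variable calculus bound on $\tilde{r}$.

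First, I would control the fuel variable $S$. From the second equation of $\Sigma$, pointwise in $x$ we have the linear ODE $\partial_t S(x,t)=-C_S \tilde{r}(\tilde{T}(x,t))\,S(x,t)$, whose explicit solution
\begin{equation*}
S(x,t)=S_o(x)\exp^{-C_S\int_0^t \tilde{r}(\tilde{T}(x,s))\,ds}
\end{equation*}
satisfies $0\le S(x,t)\le S_o(x)\le \sup_{x\in\mathcal{W}}|S_o(x)|$ for a.a.\ $x\in\mathcal{W}$ and all $t\ge 0$, because $C_S\ge 0$, $\tilde{r}\ge 0$, and $S_o$ takes values in $[0,1]$. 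This lets me pull $S$ out of the integral with the constant $\sup_{x\in\mathcal{W}}|S_o(x)|$, using also that $\tilde{T}\,\tilde{r}(\tilde{T})\ge 0$ pointwise (both factors vanish when $\tilde{T}\le 0$, and both are nonnegative when $\tilde{T}>0$), so the inequality goes the right way.

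Second, I would establish the pointwise bound
\begin{equation*}
\tilde{T}\,\tilde{r}(\tilde{T})\;\le\;\frac{\exp^{-1}}{\gamma}\,\tilde{T}^2 \qquad \forall\,\tilde{T}\in\mathbb{R}.
\end{equation*}
The case $\tilde{T}\le 0$ is immediate since the left-hand side is zero. For $\tilde{T}>0$, the claim is equivalent to $\exp^{-\gamma/\tilde{T}}/\tilde{T}\le \exp^{-1}/\gamma$, so I would maximize the scalar function $f(\tau):=\exp^{-\gamma/\tau}/\tau$ on $(0,+\infty)$. Routine differentiation gives $f'(\tau)=\exp^{-\gamma/\tau}(\gamma-\tau)/\tau^{3}$, which vanishes uniquely at $\tau=\gamma$, and the maximum value is $f(\gamma)=\exp^{-1}/\gamma$.

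Finally, I combine the two bounds: using $S(x)\tilde{T}(x)\tilde{r}(\tilde{T}(x))\ge 0$ and $S(x)\le \sup_{x\in\mathcal{W}}|S_o(x)|$, followed by the pointwise bound above, yields
\begin{equation*}
\int_{\mathcal{W}}S(x)\tilde{T}(x)\tilde{r}(\tilde{T}(x))\,dx
\le \sup_{x\in\mathcal{W}}|S_o(x)|\,\frac{\exp^{-1}}{\gamma}\int_{\mathcal{W}}\tilde{T}(x)^2\,dx
= \frac{2\exp^{-1}}{\gamma}\sup_{x\in\mathcal{W}}|S_o(x)|\,B,
\end{equation*}
which is exactly \eqref{reaction}. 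There is no real obstacle here; the only mildly delicate point is the one-variable optimization giving the sharp constant $\exp^{-1}/\gamma$, and being careful that the sign conventions (nonnegativity of $S$, of $\tilde{T}\tilde{r}(\tilde{T})$, and monotone decay of $S$) all line up so that the inequality is preserved when replacing $S$ by its initial supremum.
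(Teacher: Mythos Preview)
Your proposal is correct and follows essentially the same route as the paper: bound $S(x,t)\le S_o(x)$ from the fuel ODE, reduce the pointwise estimate $\tilde{T}\tilde{r}(\tilde{T})\le(\exp^{-1}/\gamma)\tilde{T}^2$ to maximizing $f(\tau)=\exp^{-\gamma/\tau}/\tau$ on $(0,+\infty)$, and combine. The only cosmetic differences are that you give the explicit solution for $S$ and confirm the maximum via the sign of $f'$, whereas the paper simply asserts $S(x,t)\le S_o(x)$ and checks $f''(\gamma)<0$.
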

\begin{proof}
Suppose that $\tilde{T}>0$. Then, $\tilde{T}\tilde{r}(\tilde{T}) = \tilde{T}\exp^{-\gamma/\tilde{T}}$. Hence, to establish \eqref{reaction}, we show that 
\begin{align}
\tilde{T}\exp^{-\gamma/\tilde{T}} \leq \frac{\exp^{-1}}{\gamma}\tilde{T}^2. \label{to_show}
\end{align}
To do so, we study the function
$f(s) := \frac{\exp^{-\gamma/s}}{s}$ for $s>0$. By differentiating $f$, we obtain $f'(s) = \frac{\gamma/s-1}{s^2}\exp^{-\gamma/s}$. We note, in particular, that $f'(s)=0$ if and only if $s=\gamma$. To verify if the latter is a maximum or a minimum point, we differentiate $f'$ to get $f''(s) = \left(\frac{-3\gamma +2s + \gamma^2/s-\gamma}{s^4}\right)\exp^{-\gamma/s}$. By evaluating $f''$ at $s=\gamma$, we obtain $f''(\gamma) = -(1/\gamma^3)\exp^{-1} < 0$.
As a result, $s=\gamma$ is a maximum point for $f$, and $f(s) \leq f(\gamma) = (1/\gamma)\exp^{-1}$ for all $s>0$.
In particular, 
\begin{align*}
\frac{\tilde{T}\exp^{-\gamma/\tilde{T}}}{\tilde{T}^2} = \frac{\exp^{-\gamma/\tilde{T}}}{\tilde{T}} \leq \frac{\exp^{-1}}{\gamma},
\end{align*}
which further implies \eqref{to_show}. Hence, using the fact that $S(x,t) \leq S_o$ for all $x\in \mathcal{W}$ and all $t\geq 0$, we get \eqref{reaction}. 
\end{proof}

Now, combining \eqref{B_der_1}, \eqref{diffusion2}, \eqref{advection}, and \eqref{reaction}, we obtain
\begin{align}
\dot{B} \leq&~ -\alpha B +\varepsilon \int_{\partial \mathcal{W}} \kappa \tilde{T} d\sigma -\int_{\partial \mathcal{W}}\left(\frac{n  \cdot v }{2}\right)\tilde{T} ^2d\sigma  \nonumber \\
&~+ 2\varepsilon \frac{\sup_{x\in \partial \mathcal{W}}|x|}{\sup_{x\in \mathcal{W}}|x|^2}|\tilde{T}|_{L^2(\partial \mathcal{W})}^2. \label{B_der_2}
\end{align}
As a result, under our choice of feedback law in \eqref{feedback}, we get
\begin{align}
\dot{B} \leq - \alpha B - k|\tilde{T}|_{L^2(\partial \mathcal{W})}^2 ,
\end{align}
which implies \eqref{L2_decay}.

\subsection{Unknown Wind Velocity}\label{main2}
We suppose in this section that the wind velocity $v$ is unknown, and we show how to guarantee Property \ref{prop2}. We propose redesign $\kappa$ as
\begin{align}
\kappa := -\left[\left(\frac{\hat{v}+2k}{2\varepsilon}\right)+2\frac{\sup_{x\in \partial \mathcal{W}}|x|}{\sup_{x\in \mathcal{W}}|x|^2}\right] \tilde{T}, \label{new_kappa}
\end{align}
where $\hat{v} : \partial \mathcal{W}\times (0,+\infty) \to \mathbb{R}$ is governed by
\begin{align}
\frac{\partial \hat{v}(x,t)}{\partial t} &= \frac{\lambda}{2}\tilde{T}(x,t)^2 \quad x\in \partial \mathcal{W}, \ \ t>0, \label{adaptation}\\
\hat{v}(x,0) &= \hat{v}_o(x)  \quad x\in \partial \mathcal{W}, \label{init}
\end{align}
with $\lambda >0$ an adaptation gain and $\hat{v}_o:\partial \mathcal{W}\to \mathbb{R}_{+}$.

\begin{Theorem}\label{thm2}
Suppose that Assumptions \ref{ass1} and \ref{ass2} hold, and let $\kappa$ be given by \eqref{new_kappa}, with $\hat{v}$ governed by \eqref{adaptation}-\eqref{init}. Then, Property \ref{prop2} is verified.
\end{Theorem}
\begin{proof}
To prove Theorem \ref{thm2}, we consider the functional 
\begin{align}
\mathcal{Z}(\tilde{T},\hat{v}) := B(\tilde{T}) + \frac{1}{2\lambda} |\bar{v}-\hat{v}|_{L^2(\partial \mathcal{W})}^2. \label{invariance1}
\end{align}
By differentiating $\mathcal{Z}$ with respect to time, we obtain 
\begin{align*}
\dot{\mathcal{Z}} = \dot{B} - \frac{1}{\lambda}\int_{\partial \mathcal{W}}(\bar{v}-\hat{v} )\frac{\partial \hat{v} }{\partial t} d\sigma.
\end{align*}
Furthermore, using \eqref{B_der_2}, note that we have 
\begin{align*}
\dot{\mathcal{Z}} \leq&~ -\alpha B +\varepsilon \int_{\partial \mathcal{W}} \kappa \tilde{T} d\sigma -\int_{\partial \mathcal{W}}\left(\frac{n  \cdot v }{2}\right)\tilde{T} ^2d\sigma \nonumber \\
&~ + 2\varepsilon \frac{\sup_{x\in \partial \mathcal{W}}|x|}{\sup_{x\in \mathcal{W}}|x|^2}|\tilde{T}|_{L^2(\partial \mathcal{W})}^2 - \frac{1}{\lambda}\int_{\partial \mathcal{W}}(\bar{v}-\hat{v} )\frac{\partial \hat{v} }{\partial t} d\sigma. 
\end{align*}
Next, since $|n|=1$, then $-\int_{\partial \mathcal{W}}\left(\frac{n  \cdot v }{2}\right)\tilde{T} ^2d\sigma \leq \frac{\bar{v}}{2}|\tilde{T}|_{L^2(\partial \mathcal{W})}^2$. As a consequence, we obtain 
\begin{align*}
\dot{\mathcal{Z}} \leq&~ - \alpha B + \frac{1}{\lambda} \int_{\partial \mathcal{W}}\hat{v} \frac{\partial \hat{v} }{\partial t}d\sigma +\bar{v} \int_{\partial \mathcal{W}}\left(\frac{\tilde{T} ^2}{2}- \frac{1}{\lambda}\frac{\partial \hat{v} }{\partial t}\right)d\sigma \\
&~+\varepsilon \int_{\partial \mathcal{W}} \kappa \tilde{T} d\sigma +2\varepsilon \frac{\sup_{x\in \partial \mathcal{W}}|x|}{\sup_{x\in \mathcal{W}}|x|^2}|\tilde{T}|_{L^2(\partial \mathcal{W})}^2.
\end{align*}
Under the adaptation law in \eqref{adaptation}, we get 
\begin{align*}
\dot{\mathcal{Z}} \leq&~ -\alpha B + \frac{1}{2}\int_{\partial \mathcal{W}}\hat{v} \tilde{T} ^2d\sigma + \varepsilon \int_{\partial \mathcal{W}} \kappa \tilde{T} d\sigma \\
&~+2\varepsilon \frac{\sup_{x\in \partial \mathcal{W}}|x|}{\sup_{x\in \mathcal{W}}|x|^2}|\tilde{T}|_{L^2(\partial \mathcal{W})}^2.
\end{align*}
As a result, under our choice of $\kappa$ in \eqref{new_kappa}, we obtain 
\begin{align}
\dot{\mathcal{Z}} \leq - \alpha B-k|\tilde{T}|_{L^2(\partial \mathcal{W})}^2. \label{invariance2}
\end{align}
The latter implies that $\mathcal{Z}$ is non-increasing. In particular, since $\mathcal{Z}\geq 0$ from \eqref{invariance1}, then, $\lim_{t\to +\infty} \mathcal{Z}(t)$ exists and is finite. It further implies that
\begin{align}
\int_{0}^{+\infty}\dot{\mathcal{Z}}(t)dt < +\infty.\label{invariance3}
\end{align}
Combining \eqref{invariance2} and \eqref{invariance3}, we conclude that $\int_{0}^{+\infty}B(t)dt \leq -\frac{1}{\alpha}\int_{0}^{+\infty}\dot{\mathcal{Z}}(t)dt < +\infty$.
Since $B\geq 0$ and $\int_{0}^{+\infty}B(t)dt< +\infty$, then, by the alternative to Barbalat's lemma in \cite[Lemma 3.1]{dec}, it is enough to show that $\dot{B}\leq M$ for some $M\in \mathbb{R}$ to conclude that $B$ converges asymptotically to zero. To establish the boundedness of $\dot{B}$ from above, first note that, according to \eqref{B_der_2} and \eqref{new_kappa}, we have $\dot{B} \leq \int_{\partial \mathcal{W}}\left(\frac{\bar{v}-\hat{v} }{2}\right)\tilde{T} ^2d\sigma$. Since $\hat{v}$ is non-decreasing from \eqref{adaptation}, and since we have chosen $\hat{v}_o \geq 0$ for all $x\in \partial \mathcal{W}$ in \eqref{init}, then $\hat{v}(x,t)\geq 0$ for all $x\in \partial \mathcal{W}$ and all $t\geq 0$. As a consequence, $\dot{B} \leq \frac{\bar{v}}{2}|\tilde{T}|_{L^2(\partial \mathcal{W})}^2 \leq \bar{v}B$. Now, using \eqref{invariance1} and the fact that $\mathcal{Z}$ is non-increasing, we conclude that $B$ is bounded, which further implies that $\dot{B}$ is bounded from above. As a result, $B$ converges asymptotically to zero.
\end{proof}

\section{Simulation Results}\label{simu}
In this section, we illustrate our results via numerical simulations performed in MATLAB. The numerical method is described in the Appendix of \cite{long}. It relies on a central difference scheme for space discretization of $\Delta T$, an Euler backward scheme for $\nabla T$ at the interior of $\Omega/\partial \mathcal{W}$, and an Euler forward scheme for time discretization. The boundary conditions are discretized using Euler forward and backward schemes. The time step is $\Delta t = 0.01$.

We let $\Omega = [0,L_1]\times [0,L_2]$ and $\mathcal{W} = [2L_1/3,L_1]\times [0,L_2]$, where $L_1,L_2>0$. We use an $80\times 80$ regular mesh for $\Omega$. Furthermore, we let $v= \begin{bmatrix} 1 & 0 \end{bmatrix}$.
A schematic representation of $\Omega$, $\mathcal{W}$, $\nu$, $n$, and $v$, is shown below.

\begin{tikzpicture}[
    >={Stealth[length=2mm]},
    vector/.style={->, thick, blue},
    normal/.style={->, thick, red},
    boundary/.style={thick},
    wind/.style={->, thick, green!50!black}
]

\def\L{6} 
\def\H{4} 
\def\WL{2*\L/3} 

\draw[boundary] (0,0) rectangle (\L,\H);
\node at (\L/4,\H/4) {$\Omega \setminus \mathcal{W}$};

\draw[boundary, fill=gray!20] (\WL,0) rectangle (\L,\H);
\node at (5*\L/6,\H/4) {$\mathcal{W}$};

\draw[normal] (0,\H/2) -- +(-0.5,0) node[left] {$\nu$};
\draw[normal] (\L/3,0) -- +(0,-0.5) node[below] {$\nu$};
\draw[normal] (\L/3,\H) -- +(0,0.5) node[above] {$\nu$};

\draw[normal] (\WL,\H/2) -- +(-0.5,0) node[left] {$n$};
\draw[normal] (5*\L/6,0) -- +(0,-0.5) node[below] {$n$};
\draw[normal] (5*\L/6,\H) -- +(0,0.5) node[above] {$n$};
\draw[normal] (\L,\H/2) -- +(0.5,0) node[right] {$n$};

\draw[wind] (\L/4,\H/2) -- +(0.7,0) node[right,font=\small] {$v$};

\node[below left] at (0,0) {$(0,0)$};
\node[below right] at (5.7,0) {$(L_1,0)$};
\node[above left] at (0,\H) {$(0,L_2)$};
\node[above right] at (5.7,\H) {$(L_1,L_2)$};
\node[below] at (\WL,0) {$(\frac{2L_1}{3},0)$};

\end{tikzpicture}

The inequality in Assumption \ref{ass1} becomes 
\begin{align}
2AC + \frac{2\varepsilon }{L_1^2+L_2^2}-\frac{2A\exp^{-1}}{\gamma}\sup_{x\in \mathcal{W}}\left|S_o \right| > 0. \label{ass_e}
\end{align}
We select $A=1.8793\times 10^2,\ \ C=7.2558\times 10^{-4}, \ \
\varepsilon =2.1360\times 10^{-1}, \ \ \gamma = 5.5849\times 10^2$. Additionally, $S_o =1$, and $L_1=L_2=50(m)$. We can note that Assumption \ref{ass1} is verified since the left-hand side of \eqref{ass_e} is $2.52\times 10^{-2}$. 

As in \cite{model3}, we let the ambient temperature be given by $T_a = 300(K)$. According to Theorem \ref{thm1}, since Assumption \ref{ass1} holds, then Property \ref{prop1} is verified, with $\alpha := 2.52\times 10^{-2}$, if we set $\kappa$ as in \eqref{feedback}. We select the control gain $k:=1$ for simulations. Moreover, the initial temperature distribution is given by $T(x,0) = T_o  = T_a + T_c\exp^{\frac{-|x-\bar{x}|^2}{2w^2}}$ for all $x\in \Omega$, where $T_c =1000(K)$, $w:=10$, and $\bar{x} = \begin{bmatrix} 25 & 25\end{bmatrix}(m)$.

\begin{figure}
    \centering
    \includegraphics[width=\linewidth]{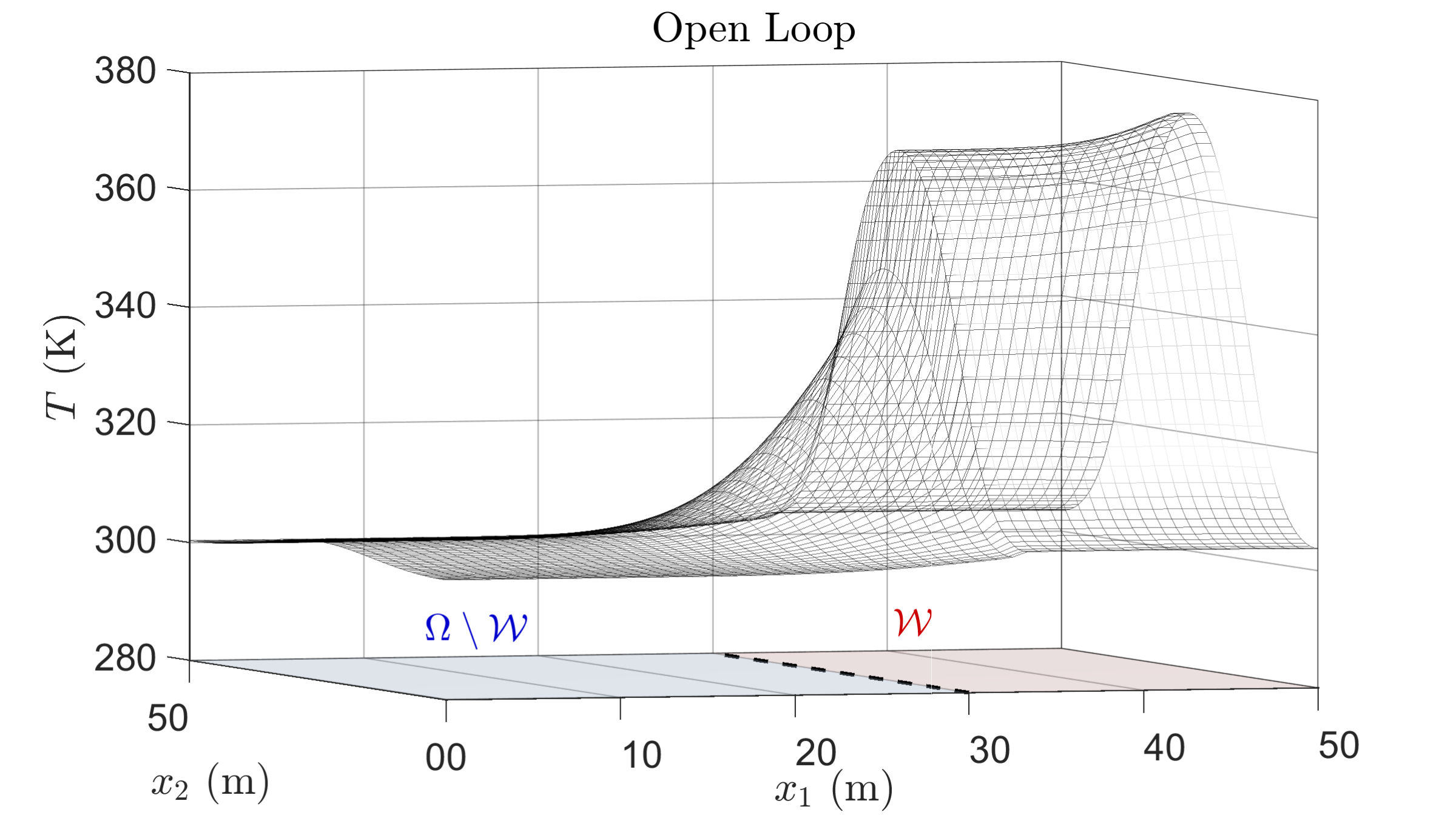} 
    \includegraphics[width=\linewidth]{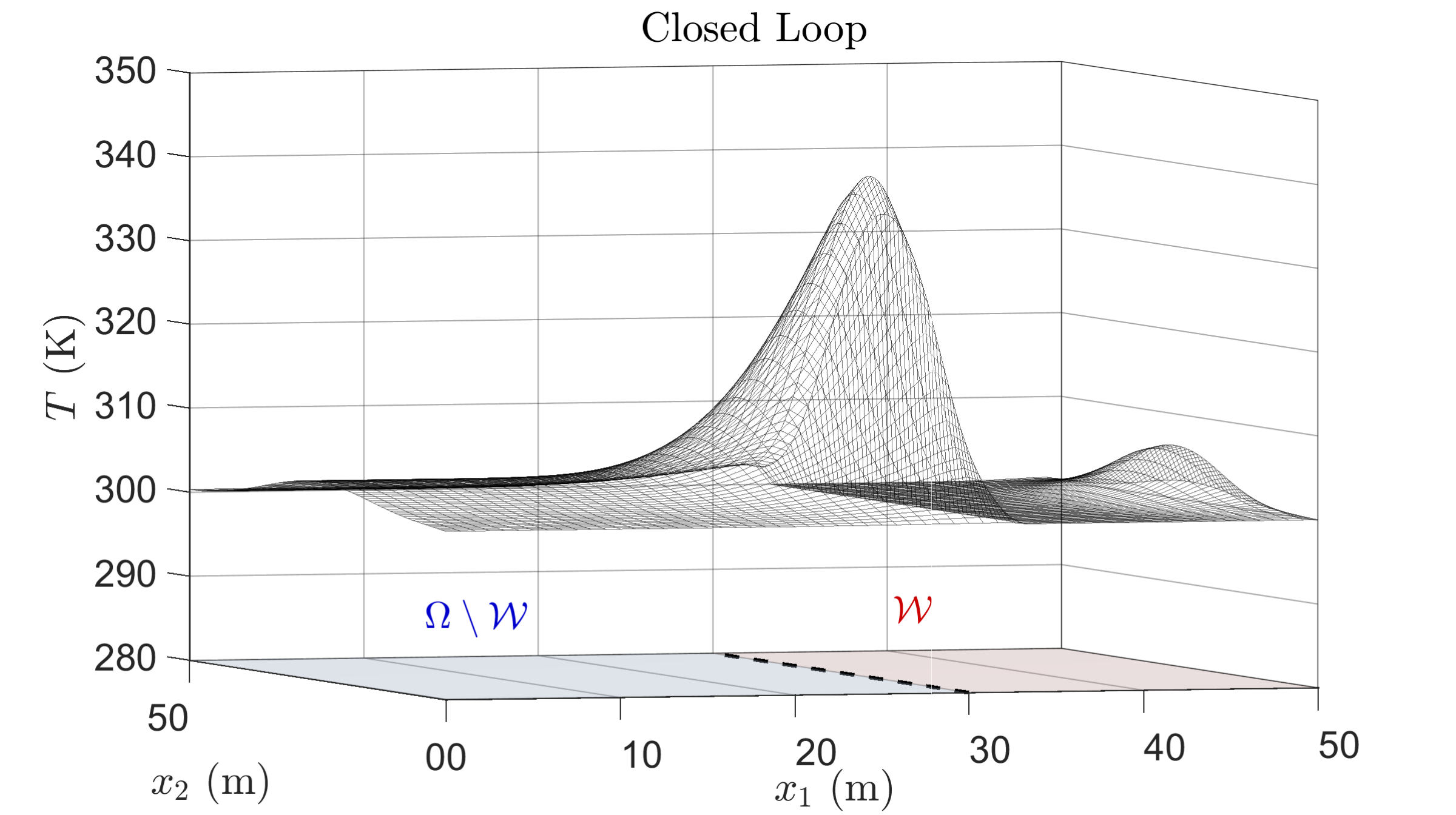}
    \caption{Response of $\Sigma$ to $\kappa:=0$ (top) vs response of $\Sigma$ to \eqref{feedback} with $k=1$ (bottom), at $t=20s$.}
    \label{fig1}
\end{figure}

\begin{figure}
    \centering
    \includegraphics[width=\linewidth]{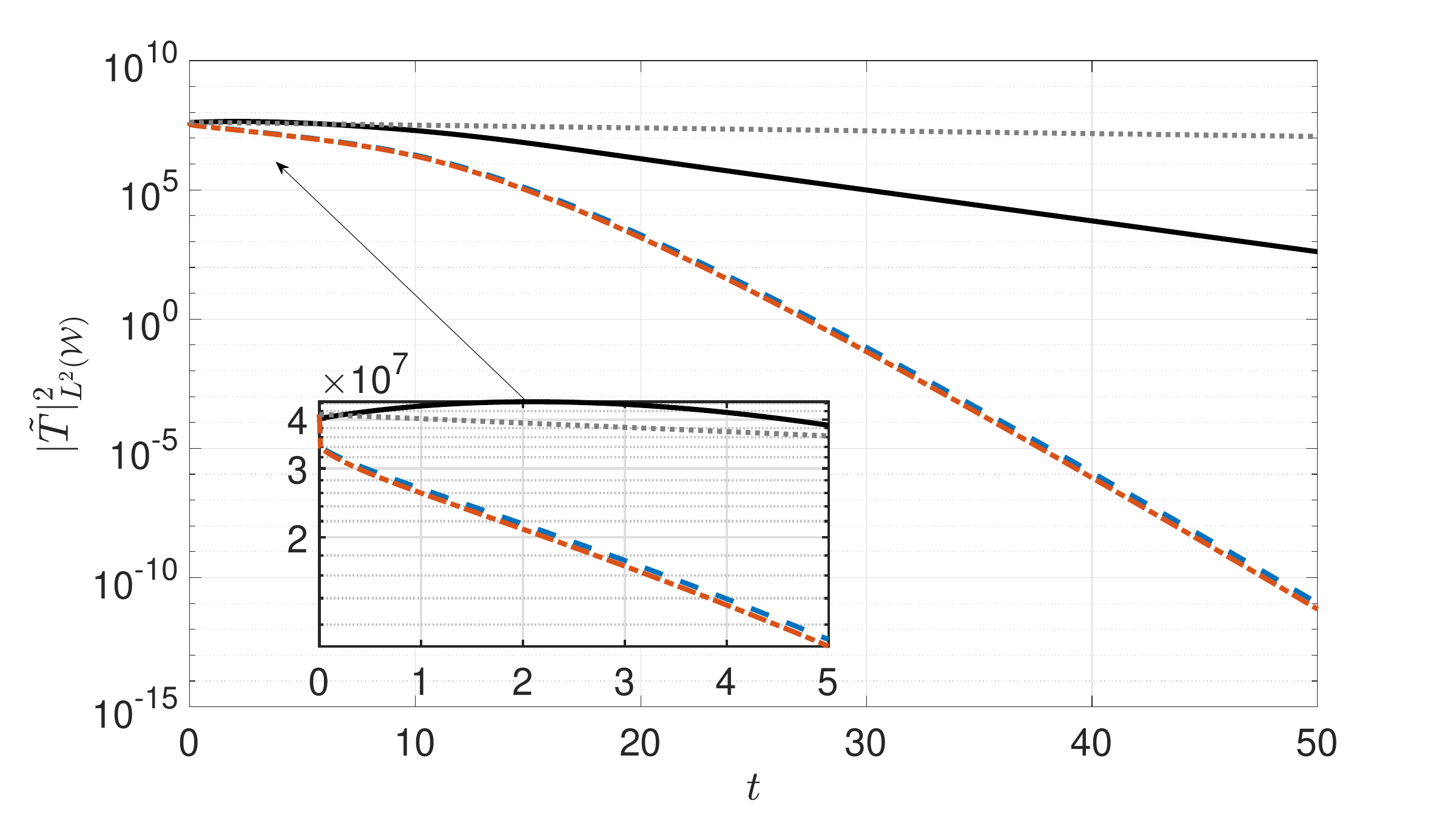}
    \caption{$|\tilde{T}|_{L^2(\mathcal{W})}^2$ under $\kappa:=0$ (black) vs under \eqref{feedback} (blue) vs under \eqref{new_kappa}-\eqref{adaptation}-\eqref{init} (red) vs the theoretical bound at the right-hand side of \eqref{L2_decay} (gray).}
    \label{barrier_2}
\end{figure}
At the top of Figure \ref{fig1}, we simulate the response of $\Sigma$, at $t=20s$, to $\kappa := 0$. We can note that the temperature within the region $\mathcal{W}$ is above $T_a = 300(K)$. When closing the loop with \eqref{feedback}, the response of $\Sigma$ at $t=20(s)$ is shown at the bottom of Figure \ref{fig1}. We can note that, because of our controller, the temperature within $\mathcal{W}$ converges towards the ambient temperature $T_a=300(K)$. This is expected from Theorem \ref{thm1}, since $\tilde{T}=T-T_a$ converges to zero in the $L^2$ norm according to \eqref{L2_decay}. To observe this convergence, we plot in Figure \ref{barrier_2} the map $t\mapsto |\tilde{T}(\cdot,t)|_{L^2(\mathcal{W})}^2$ under $\kappa:=0$, then under \eqref{feedback}. We also plot the theoretical bound on $|\tilde{T}|_{L^2(\mathcal{W})}^2$ at the right-hand side of \eqref{L2_decay}. As expected, \eqref{L2_decay} is verified. Moreover, under $\kappa:=0$, although $|\tilde{T}|_{L^2(\mathcal{W})}^2$ decays towards zero, it first grows exceeding the right-hand side of \eqref{L2_decay} before starting to decay. Moreover, its decay is slower than when \eqref{feedback} is applied. Next, we apply the adaptive controller in \eqref{new_kappa}, with $\hat{v}$ governed by \eqref{adaptation}-\eqref{init}. We select the control gain $k=1$, the adaptation gain $\lambda = 0.1$, and the initial condition $\hat{v}_o(x)  = 0$ for all $x\in \partial \mathcal{W}$. According to Theorem \ref{thm2}, the $L^2$ norm of $\tilde{T}$ over $\mathcal{W}$ should converge asymptotically to zero, which is confirmed in Figure \ref{barrier_2}. 

Finally, we consider the case where heat dissipation to the surroundings is negligible, i.e., $C = 0$. Under this condition, Assumption \ref{ass1} is not satisfied. However, Figure \ref{energy_last} still shows that the temperature of $\mathcal{W}$ converges to $T_a = 300(K)$ under both \eqref{feedback} and \eqref{new_kappa}-\eqref{adaptation}-\eqref{init}, whereas under $\kappa:=0$, the $L^2$ norm of $\tilde{T}$ within $\mathcal{W}$ is increasing. These simulation results suggest that Assumption \ref{ass1}, while being sufficient to guarantee the asymptotic convergence of $t\mapsto |\tilde{T}(\cdot,t)|_{L^2(\mathcal{W})}^2$ to zero under our controllers, may not be necessary.

\begin{figure}
    \centering
    \includegraphics[width=\linewidth]{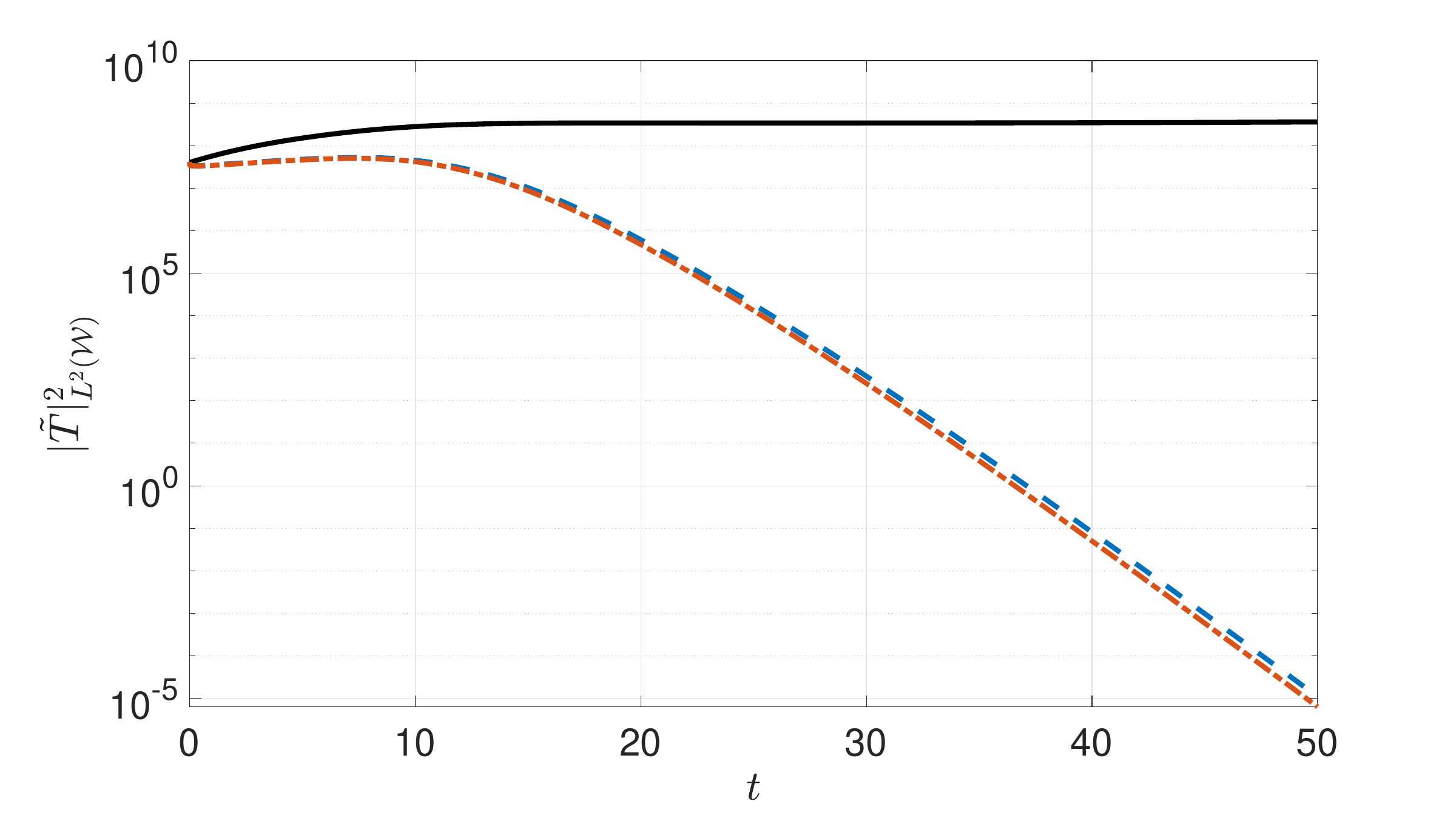}
    \caption{$|\tilde{T}|_{L^2(\mathcal{W})}^2$ under $\kappa:=0$ (black) vs under \eqref{feedback} (blue) vs under \eqref{new_kappa}-\eqref{adaptation}-\eqref{init} (red), when $C=0$.}
    \label{energy_last}
\end{figure}

\section{Conclusion and Research Perspectives}
In this paper, we considered a well-established model for heat propagation and fuel depletion in wildfires, for which we introduced a feedback-design strategy for regulating the temperature of a given subregion towards the ambient temperature. Specifically, for known wind velocity, we designed a Neumann-type controller guaranteeing the exponential convergence of the temperature to the ambient temperature. For unknown wind velocity, we developed an adaptive controller ensuring the asymptotic convergence of the temperature to the ambient level. An important direction would be to achieve convergence of $|\tilde{T}|_{L^2(\mathcal{W})}$ to zero at a prescribed rate. It would also be useful to enforce finite-time convergence of $S$ to zero within $\mathcal{W}$, at a prescribed settling time. Moreover, a relevant direction is to evaluate the amount of water required to achieve the control objectives under realistic wildfire profiles and to study scenarios where the irrigation rate is limited. Finally, applying the feedback law through piecewise-constant inputs and incorporating sampled-data or event-triggered control, as well as accounting for actuator intermittency, represent important directions for future research.

\section{Acknowledgment}
This work has been supported by the LabEx EnergyAlps2025.

\end{document}